\def\today{\ifcase \month \or
   January \or February \or March \or April \or
   May \or June \or July \or August \or
   September \or October \or November \or December \fi
   \space\number\day , \number\year}
  \newcommand\@dotsep{4.5}
  \def\@tocline#1#2#3#4#5#6#7{\relax
     \ifnum #1>\c@tocdepth 
     \else
     \par \addpenalty\@secpenalty\addvspace{#2}%
     \begingroup \hyphenpenalty\@M
     \@ifempty{#4}{%
     \@tempdima\csname r@tocindent\number#1\endcsname\relax
        }{%
         \@tempdima#4\relax
           }%
      \parindent\z@ \leftskip#3\relax \advance\leftskip\@tempdima\relax
      \rightskip\@pnumwidth plus1em \parfillskip-\@pnumwidth
       #5\leavevmode\hskip-\@tempdima #6\relax
       \leaders\hbox{$\m@th
       \mkern \@dotsep mu\hbox{.}\mkern \@dotsep mu$}\hfill
       \hbox to\@pnumwidth{\@tocpagenum{#7}}\par
       \nobreak
        \endgroup
         \fi}
\begin{document}

\makeatletter
\@addtoreset{figure}{section}
\def\thefigure{\thesection.\@arabic\c@figure}
\def\fps@figure{h,t}
\@addtoreset{table}{bsection}

\def\thetable{\thesection.\@arabic\c@table}
\def\fps@table{h, t}
\@addtoreset{equation}{section}
\def\theequation{\arabic{equation}}
\makeatother

\newcommand{\bfi}{\bfseries\itshape}

\newtheorem{theorem}{Theorem}
\newtheorem{corollary}[theorem]{Corollary}
\newtheorem{definition}[theorem]{Definition}
\newtheorem{example}[theorem]{Example}
\newtheorem{lemma}[theorem]{Lemma}
\newtheorem{notation}[theorem]{Notation}
\newtheorem{proposition}[theorem]{Proposition}
\newtheorem{remark}[theorem]{Remark}
\newtheorem{setting}[theorem]{Setting}
\newtheorem{problem}[theorem]{Problem}

\numberwithin{theorem}{section}
\numberwithin{equation}{section}

\renewcommand{\1}{{\bf 1}}
\newcommand{\Ad}{{\rm Ad}}
\newcommand{\Alg}{{\rm Alg}\,}
\newcommand{\Aut}{{\rm Aut}\,}
\newcommand{\ad}{{\rm ad}}
\newcommand{\Borel}{{\rm Borel}}
\newcommand{\botimes}{\bar{\otimes}}
\newcommand{\Cb}{{\mathcal C}_{\rm b}}
\newcommand{\Ci}{{\mathcal C}^\infty}
\newcommand{\Cpol}{{\mathcal C}^\infty_{\rm pol}}
\newcommand{\Der}{{\rm Der}\,}
\newcommand{\de}{{\rm d}}
\newcommand{\ee}{{\rm e}}
\newcommand{\End}{{\rm End}\,}
\newcommand{\ev}{{\rm ev}}
\newcommand{\hotimes}{\widehat{\otimes}}
\newcommand{\id}{{\rm id}}
\newcommand{\ie}{{\rm i}}
\newcommand{\iotaR}{\iota^{\rm R}}
\newcommand{\GL}{{\rm GL}}
\newcommand{\gl}{{{\mathfrak g}{\mathfrak l}}}
\newcommand{\Hom}{{\rm Hom}\,}
\newcommand{\Img}{{\rm Im}\,}
\newcommand{\Ind}{{\rm Ind}}
\newcommand{\Ker}{{\rm Ker}\,}
\newcommand{\Lie}{\text{\bf L}}
\newcommand{\m}{\text{\bf m}}
\newcommand{\pr}{{\rm pr}}
\newcommand{\Ran}{{\rm Ran}\,}
\renewcommand{\Re}{{\rm Re}\,}
\newcommand{\so}{\text{so}}
\newcommand{\spa}{{\rm span}}
\newcommand{\Tr}{{\rm Tr}\,}
\newcommand{\Op}{{\rm Op}}
\newcommand{\U}{{\rm U}}

\newcommand{\Ac}{{\mathcal A}}
\newcommand{\Bc}{{\mathcal B}}
\newcommand{\Cc}{{\mathcal C}}
\newcommand{\Dc}{{\mathcal D}}
\newcommand{\Ec}{{\mathcal E}}
\newcommand{\Fc}{{\mathcal F}}
\newcommand{\Hc}{{\mathcal H}}
\newcommand{\Jc}{{\mathcal J}}
\newcommand{\Lc}{{\mathcal L}}
\renewcommand{\Mc}{{\mathcal M}}
\newcommand{\Nc}{{\mathcal N}}
\newcommand{\Oc}{{\mathcal O}}
\newcommand{\Pc}{{\mathcal P}}
\newcommand{\Rc}{{\mathcal R}}
\newcommand{\Sc}{{\mathcal S}}
\newcommand{\Tc}{{\mathcal T}}
\newcommand{\Vc}{{\mathcal V}}
\newcommand{\Uc}{{\mathcal U}}
\newcommand{\Xc}{{\mathcal X}}
\newcommand{\Yc}{{\mathcal Y}}
\newcommand{\Wig}{{\mathcal W}}

\newcommand{\Bg}{{\mathfrak B}}
\newcommand{\Fg}{{\mathfrak F}}
\newcommand{\Gg}{{\mathfrak G}}
\newcommand{\Ig}{{\mathfrak I}}
\newcommand{\Jg}{{\mathfrak J}}
\newcommand{\Lg}{{\mathfrak L}}
\newcommand{\Pg}{{\mathfrak P}}
\newcommand{\Sg}{{\mathfrak S}}
\newcommand{\Xg}{{\mathfrak X}}
\newcommand{\Yg}{{\mathfrak Y}}
\newcommand{\Zg}{{\mathfrak Z}}

\newcommand{\ag}{{\mathfrak a}}
\newcommand{\bg}{{\mathfrak b}}
\newcommand{\dg}{{\mathfrak d}}
\renewcommand{\gg}{{\mathfrak g}}
\newcommand{\hg}{{\mathfrak h}}
\newcommand{\kg}{{\mathfrak k}}
\newcommand{\mg}{{\mathfrak m}}
\newcommand{\n}{{\mathfrak n}}
\newcommand{\og}{{\mathfrak o}}
\newcommand{\pg}{{\mathfrak p}}
\newcommand{\sg}{{\mathfrak s}}
\newcommand{\tg}{{\mathfrak t}}
\newcommand{\ug}{{\mathfrak u}}
\newcommand{\zg}{{\mathfrak z}}

\newcommand{\BB}{\mathbb B}
\newcommand{\CC}{\mathbb C}
\newcommand{\KK}{\mathbb K}
\newcommand{\NN}{\mathbb N}
\newcommand{\QQ}{\mathbb Q}
\newcommand{\RR}{\mathbb R}
\newcommand{\TT}{\mathbb T}
\newcommand{\ZZ}{\mathbb Z}

\newcommand{\ep}{\varepsilon}

\newcommand{\hake}[1]{\langle #1 \rangle }

\newcommand{\scalar}[2]{\langle #1 ,#2 \rangle }
\newcommand{\vect}[2]{(#1_1 ,\ldots ,#1_{#2})}
\newcommand{\norm}[1]{\Vert #1 \Vert }
\newcommand{\normrum}[2]{{\norm {#1}}_{#2}}

\newcommand{\upp}[1]{^{(#1)}}
\newcommand{\p}{\partial}

\newcommand{\opn}{\operatorname}
\newcommand{\slim}{\operatornamewithlimits{s-lim\,}}
\newcommand{\sgn}{\operatorname{sgn}}

\newcommand{\seq}[2]{#1_1 ,\dots ,#1_{#2} }
\newcommand{\loc}{_{\opn{loc}}}

\makeatletter
\title[Faithful representations of nilpotent Lie algebras]{Faithful representations of infinite-dimensional nilpotent Lie algebras}
\author{Ingrid Belti\c t\u a 
and Daniel Belti\c t\u a
}
\address{Institute of Mathematics ``Simion Stoilow'' of the Romanian Academy, 
Research Unit~1, 
P.O. Box 1-764, Bucharest, Romania}
\email{Ingrid.Beltita@imar.ro}
\email{Daniel.Beltita@imar.ro}
\keywords{nilpotent Banach-Lie group, polynomial, faithful representation}
\subjclass[2000]{Primary 22E66; Secondary 22E65, 22E25 
}
\date{\today}

\begin{abstract} 
For locally convex, nilpotent Lie algebras we construct 
faithful representations by nilpotent operators on a suitable locally convex space. 
In the special case of nilpotent Banach-Lie algebras we get norm continuous representations by bounded operators on Banach spaces.
\end{abstract}

\maketitle


\section{Introduction}

The aim of this short note is to provide an affirmative answer to a question raised in the seminal paper by K.-H. Neeb \cite[pag. 440, Probl. VII.2 and VIII.6(b)]{Ne06}, regarding suitable versions of Birkhoff's embedding theorem in infinite dimensions. 
In particular, we prove that every nilpotent Banach-Lie algebra has a bounded faithful representation by nilpotent operators on a suitable Banach space; 
see Corollary~\ref{main_cor} below. 

In order to put this result in a proper perspective, we recall a few classical facts: 
\begin{itemize}
\item Every finite-dimensional nilpotent Lie algebra can be realized as a Lie algebra of nilpotent matrices, by Birkhoff's embedding theorem; see \cite{Bi37}.
\item In a more general situation, every finite-dimensional Lie algebra can be faithfully represented by matrices, by Ado's theorem; see \cite{Ad35}. 
\item The above statement of Ado's theorem cannot be directly extended to the Banach category, by replacing the finite-dimensional vector spaces by Banach spaces. 
In fact, it has been long known that there exist Banach-Lie algebras that cannot be faithfully represented by Banach space operators; 
see \cite{vES73} and also \cite[pag.~438, Rem.~VIII.7]{Ne06} for a broader discussion. 
\end{itemize}
Besides this old open problem concerning the extension of Ado's theorem to infinite dimensions, there also exists a current interest in questions regarding the finite-dimensional situation. 
Thus, one has been interested in finding good estimates for the size of the matrices that have to be used in order to realize a given nilpotent Lie algebra; 
see \cite{dGr97}, \cite{BEdG09}, \cite{BM11}, and the references therein. 
Among the various methods developed in \cite{BEdG09}, 
the second one is in some sense related to the setting of the present paper, 
inasmuch as the dual of the universal enveloping algebra of any Lie algebra 
is closely related to the space of polynomial functions on that Lie algebra. 
Our point here is to provide an analytic version of that method 
which is eventually applicable to some (infinite-dimensional) topological Lie algebras, 
namely to locally convex, nilpotent Lie algebras (\cite{Li69, Ne06}). 

The main result of our paper is Theorem~\ref{main} and provides faithful representations for algebras of this type by nilpotent operators, and for the corresponding locally convex Lie groups by unipotent operators. 
In the special case of Banach-Lie algebras, we get in Corollary~\ref{main_cor} bounded representations. 

We conclude the introduction by pointing out that the aforementioned corollary 
provides a partial solution to a more general problem which was raised in \cite{Wo98} 
and is still open in its full generality:     
A quasinilpotent Banach-Lie algebra is a Banach-Lie algebra $\gg$ with the property that 
for every $x\in\gg$ the bounded linear operator $\ad_{\gg}x\colon\gg\to\gg$ is quasinilpotent, 
in the sense that its spectrum is equal to $\{0\}$ or equivalently 
$\lim\limits_{n\to\infty}\Vert(\ad_{\gg}x)^n\Vert^{1/n}=0$, 
where the operator norm is computed with respect to any norm which defines the topology of~$\gg$. 
With this terminology, it was asked in \cite[Question 2]{Wo98} 
whether every quasinilpotent Banach-Lie algebra admits a faithful continuous representation 
on some Banach space. 
It is clear that if a Banach-Lie algebra is nilpotent then it is also quasinilpotent, 
hence Corollary~\ref{main_cor} in the present paper solves in the affirmative 
the above question in the special case of the nilpotent Banach-Lie algebras.

\section{Main results}

\begin{notation}
\normalfont
We will work in the following setting: 
\begin{itemize}
\item Unless otherwise stated, $\gg$ is a Hausdorff, locally convex, nilpotent  Lie algebra over $\KK\in\{\RR,\CC\}$ 
with the nilpotency index denoted by $N+1$, where $N\in\NN$. 
Thus, if we define $\gg^{(1)}=\gg$ and $\gg^{(j)}:=[\gg,\gg^{(j-1)}]$ for $j\ge1$, 
then we have $\gg^{(N)}\ne\{0\}=\gg^{(N+1)}$. 
\item $G=(\gg,\ast)$ is the corresponding simply connected, locally convex, nilpotent  Lie group, 
whose group operation $\ast$ is defined by the Baker-Campbell-Hausdorff formula on~$\gg$ (see \cite[Ex. IV 1.6.6]{Ne06}). 
Note that in this case there exists a smooth exponential mapping, which is in fact the identity.  
Therefore we use the same letters to denote elements from $\gg$ and $G$, and  use the notation $G$ 
only when it is useful to emphasize the group structure.

\item We denote by $\Cc(\gg,\KK)$ the space of $\KK$-valued continuous functions on $\gg$
endowed with the topology of uniform convergence on the bounded sets, and by 
$$\lambda\colon G\to\End(\Cc(\gg,\KK)),\quad (\lambda(x)\phi)(y)=\phi((-x)\ast y)$$ 
the corresponding left-regular representation of $G$. 

If $\phi\in\Cc(\gg,\KK)$ and $x,y\in\gg$, then we define 
\begin{equation}\label{dot}
({ \de\lambda}(x)\phi)(y):=\lim_{t\to 0}\frac{\phi((-tx)\ast y)-\phi(y)}{t}
\end{equation} 
whenever the above limit exists. 
(Compare \cite[Def.~2.1]{BB11b}.)
\item For every locally convex space $\Yc$ over $\CC$ and every integer $m\ge0$ we denote by $\Pc_m(\gg,\Yc)$ the linear space of $\Yc$-valued, continuous polynomial functions of degree $\le m$ on~$\gg$ 
and by $\Pc^m(\gg,\Yc)$ its subspace consisting of the homogeneous polynomial functions of degree $m$ (see \cite{BS71a}). 
We endow $\Pc_m(\gg,\Yc)$ with the topology of uniform convergence on the bounded sets. 
We have a topological direct sum decomposition 
\begin{equation}\label{dot2}
\Pc_m(\gg,\Yc)
=\Pc^0(\gg,\Yc)\dotplus \Pc^1(\gg,\Yc)\dotplus\cdots\dotplus\Pc^m(\gg,\Yc),
\end{equation}
and then it easily follows by \cite[Th.~A, Th.~2]{BS71a} that $\Pc_m(\gg,\Yc)$ is a closed subspace of $\Cc(\gg,\KK)$.
\end{itemize}
\qed
\end{notation}

\begin{remark}\label{banach}
\normalfont 
Assume here that $\gg$ is a Banach-Lie algebra and let $m\in \NN$. 
The space $\Pc^m(\gg, \KK)$ is a Banach space and  one norm defining its topology can be described  as follows. 
For $\phi\in \Pc^m(\gg, \KK)$ we denote by 
$\tilde{\phi}\colon \gg\times\cdots \times \gg \to \KK$ the symmetric $m$-linear
bounded functional satisfying 
$ \phi(x)  =\tilde{\phi}(x, \dots, x)$ for every $x\in \gg$. 
Then we set $\Vert \phi\Vert_{\Pc^m(\gg, \KK)}:=\Vert \tilde{\phi}\Vert$.  (See \cite[Prop.1]{BS71a}.)
Here we use the norm 
$$ \Vert \tilde{\phi}\Vert= \sup\{\vert {\tilde\phi}(x_1,\dots, x_m) \vert \,\mid x_1, \dots x_m\in \gg, \, 
\Vert x_j\Vert\le 1, j=1, \dots m\}.$$  
The norm $\Vert \cdot \Vert_{\Pc^m(\gg, \KK)}$ is equivalent  (see \cite[Prop.1]{BS71a}) to the norm 
$$ \phi\mapsto \sup_{\Vert x\Vert\le 1} \vert \phi(x)\vert. $$
We have the following formula for the directional derivatives: 
$$(\forall x,y\in\gg)\quad \phi'_y(x)=m\widetilde{\phi}(x,y,\dots,y)$$
and 
this implies that the estimate
$$ \vert \phi'_y(x)\vert \le m \Vert x\Vert \cdot \Vert y\Vert^{m-1}  \Vert \phi\Vert_{\Pc^{m}(\gg, \KK)}
$$
for every $x$, $y \in\gg$.
\qed
\end{remark}

\begin{remark}
\normalfont
It is not clear in general that if $\phi\in\Cc(\gg,\KK)$ then the mapping $G\to\Cc(\gg,\KK)$, $x\mapsto\lambda(x)\phi$ is continuous. 
However, it is easily checked that this is the case if $\gg$ is finite-dimensional or if it is a Banach-Lie algebra and 
$\phi\in\bigcup\limits_{m\ge0}\Pc_m(\gg,\KK)$. 
It then follows by \cite[Prop.~5.1]{Ne10} that if $\Vc$ is a closed subspace 
of $\Pc_m(\gg,\KK)$ for some $m\ge0$ (hence $\Vc$ is a Banach space) 
such that $\lambda(G)\Vc\subseteq\Vc$, 
then the mapping 
$$G\times\Vc\to\Vc, \quad (x,\phi)\mapsto\lambda(x)\phi$$ 
is continuous. 
As we will see by Corollary~\ref{main_cor} below, the representation of $G$ on $\Vc$ defined by $\lambda$ is actually norm-continuous.
\qed
\end{remark}

\begin{lemma}\label{lemma1}
If $m\in\NN$, $\Phi\subseteq\Pc_m(\gg,\KK)$, and we denote 
$$\Vc_\Phi:=\overline{\spa}\,(\lambda(G)\Phi)$$
then $\Vc_\Phi$ is an invariant subspace for the regular representation~$\lambda$ and moreover we have 
$\Vc_\Phi\subseteq\Pc_{mN}(\gg,\KK)$. 
\end{lemma}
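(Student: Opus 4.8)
The plan is to treat the two assertions separately. The invariance of $\Vc_\Phi$ under $\lambda$ follows formally from the representation property together with the continuity of each operator $\lambda(x)$ on $\Cc(\gg,\KK)$. Indeed, since $\lambda(x)\lambda(y)=\lambda(x\ast y)$, one has $\lambda(x)(\lambda(y)\phi)=\lambda(x\ast y)\phi\in\lambda(G)\Phi$ for all $x,y\in G$ and $\phi\in\Phi$, so $\lambda(x)$ maps $\spa(\lambda(G)\Phi)$ into itself. To pass to the closure I need each $\lambda(x)$ to be a continuous endomorphism of $\Cc(\gg,\KK)$; writing $\lambda(x)\phi=\phi\circ\tau_x$ with $\tau_x(y)=(-x)\ast y$, this holds because $\tau_x$ is a continuous map carrying bounded sets to bounded sets, so that uniform convergence on bounded sets is preserved by precomposition with $\tau_x$.

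The substantial point is the inclusion $\Vc_\Phi\subseteq\Pc_{mN}(\gg,\KK)$. Since $\Pc_{mN}(\gg,\KK)$ is a closed subspace of $\Cc(\gg,\KK)$, as recorded in the Notation, it suffices to show that $\lambda(x)\phi\in\Pc_{mN}(\gg,\KK)$ for every $\phi\in\Phi\subseteq\Pc_m(\gg,\KK)$ and every $x\in G$, and then take the closed linear span. The key observation is that, because $\gg$ is nilpotent with $\gg^{(N+1)}=\{0\}$, the Baker--Campbell--Hausdorff series defining $\ast$ is a \emph{finite} sum of iterated brackets, each of which is a continuous multilinear map since the bracket of $\gg$ is continuous; hence for fixed $x$ the translation $\tau_x\colon y\mapsto(-x)\ast y$ is a continuous polynomial map $\gg\to\gg$.

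It remains to bound the degree of $\tau_x$ in $y$. Grouping the BCH contributions by the number $j$ of occurrences of $y$, the homogeneous-of-degree-$j$ part of $\tau_x$ is a sum of iterated brackets with exactly $j$ factors equal to $y$. For $j\ge2$ such a bracket monomial must involve at least one factor equal to $-x$, since an iterated bracket in the single letter $y$ of length $\ge2$ vanishes by antisymmetry; its length is therefore at least $j+1$, while nilpotency forces the length to be at most $N$, giving $j\le N-1$. Together with the linear contribution (of degree $1$) this shows $\tau_x$ has degree $\le N$ in $y$. Consequently $\lambda(x)\phi=\phi\circ\tau_x$ is the composition of a continuous polynomial of degree $\le m$ with a continuous polynomial map of degree $\le N$, hence a continuous polynomial of degree $\le mN$; here I would invoke the description of continuous polynomial functions from \cite{BS71a} to justify that continuity is preserved and that the degree of the composition is bounded by the product of the degrees.

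I expect the main obstacle to be the careful bookkeeping of this degree estimate in the merely locally convex setting: one must verify that the finitely many iterated brackets in the BCH formula are genuine continuous homogeneous polynomial maps of the asserted degrees, that their sum is a continuous polynomial map $\gg\to\gg$ of degree $\le N$, and that composing with $\phi$ stays within $\Pc_{mN}(\gg,\KK)$ rather than merely within the abstract polynomials. The antisymmetry fact—that a nonzero iterated bracket in a single variable has length one—is precisely what keeps the degree of $\tau_x$ at $\le N$ and thus the composite at $\le mN$, and tracking this cleanly through the locally convex polynomial calculus is the delicate step.
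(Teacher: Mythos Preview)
Your proof is correct and follows essentially the same approach as the paper: both show that the translation map $y\mapsto(-x)\ast y$ is a continuous polynomial map of degree at most $N$ (the paper's $L_x$, your $\tau_x$), invoke \cite{BS71a} to conclude $\phi\circ\tau_x\in\Pc_{mN}(\gg,\KK)$, and then pass to the closed span using that $\Pc_{mN}(\gg,\KK)$ is closed. You supply more detail than the paper does---in particular the explicit degree count via the number of $y$-occurrences in each BCH monomial (which in fact yields the slightly sharper bound $\deg\tau_x\le\max(1,N-1)$), and the continuity of $\lambda(x)$ on $\Cc(\gg,\KK)$ needed to carry invariance to the closure---but the skeleton is the same.
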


\begin{proof}
Since $\lambda$ is a group representation, 
it follows at once that $\Vc_\phi$ is an invariant subspace. 
Furthermore, let $x\in \gg$ arbitrary and define 
$$L_x\colon\gg\to\gg,\quad L_x(y)=(-x)\ast y.$$ 
Since $\gg$ is an $(N+1)$-step nilpotent Lie algebra, it follows that $L_x\in\Pc_N(\gg,\gg)$, 
and then for $\phi\in\Pc_m(\gg,\KK)$ we get $\lambda(x)\phi=\phi\circ L_x\in \Pc_{mN}(\gg,\KK)$ by \cite[Prop. 2, Cor.~1, page~63]{BS71a}. 
Thus $\lambda(G)\Phi \subseteq\Pc_{mN}(\gg,\KK)$. 
Since $\Pc_{mN}(\gg,\KK)$ is a closed subspace of $\Cc(\gg,\KK)$, we get 
$\Vc_\Phi\subseteq\Pc_{mN}(\gg,\KK)$, which concludes the proof.
\end{proof}

\begin{remark}\label{rem1}
\normalfont
If $\dim\gg=n<\infty$, then it follows by Lemma~\ref{lemma1} that every polynomial function on $\gg$ is a finite vector for the regular representation $\lambda$, in the sense that it generates a finite-dimensional cyclic subspace. 
Even more precisely, if $\phi\in\Pc_m(\gg,\KK)$ and $\Phi:=\{\phi\}$, then 
$\dim\Vc_\Phi\le\dim\Pc_{mN}(\gg,\KK)=\frac{(mN+n)!}{n!(mN)!}$.
We recall how the latter formula is obtained: 
The linear space of real polynomial functions in $n$ variables of degree at most $mN$ 
is naturally isomorphic (by using homogeneous coordinates) 
to the space of real homogeneous polynomial functions in $n+1$  variables of degree precisely $mN$, 
and the dimension of the latter space is $\frac{(mN+n)!}{n!(mN)!}$; 
see for instance \cite[Th. 1.7.5]{Me03}. 

From the point of view of the question we address here, 
the drawback of the above estimate on $\dim\Vc_\Phi$ is that it depends on the dimension of~$\gg$. 
Lemmas~\ref{corr1}, \ref{corr2}, \ref{corr3} and  \ref{corr4} below are aimed to fix this problem.   
\qed
\end{remark}

The following observation is well known, and the simple proof is included for the sake of completeness. 
We will need this result below in Lemma~\ref{corr2}, 
in order to get a uniform estimate for 
the dimension of the subalgebra generated by a finite subset of a nilpotent Lie algebra. 

\begin{lemma}\label{corr1}
Let $\mg$ be an arbitrary Lie algebra over $\KK$ and denote by $\mg_S$ the Lie subalgebra generated by some subset $S\subseteq\mg$. 
Then  
$$\mg_S=\spa_{\KK}(S\cup\{(\ad_{\mg}v_r)\cdots(\ad_{\mg}v_1)w\mid v_1,\dots,v_r,w\in S,\ r=1,2,\dots \}).$$
\end{lemma}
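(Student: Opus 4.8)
The plan is to prove the equality by establishing two inclusions, where the nontrivial direction is showing that the right-hand side is already a Lie subalgebra. Let me denote by $W$ the span on the right-hand side. Since $S \subseteq W$ and $\mg_S$ is by definition the smallest Lie subalgebra containing $S$, the inclusion $\mg_S \subseteq W$ will follow immediately once I know $W$ is a subalgebra. Conversely, every generator of $W$ — namely $S$ together with iterated brackets $(\ad v_r)\cdots(\ad v_1)w$ — clearly lies in $\mg_S$, since $\mg_S$ is closed under brackets and contains $S$; hence $W \subseteq \mg_S$. So the whole content is the first inclusion.

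So let me think about the right-hand side inclusion. The author wants to show $W$ is a subalgebra. I'll verify closure under the bracket. Since $W$ is spanned by elements of two types — elements of $S$, and iterated ad-strings $(\ad v_r)\cdots(\ad v_1)w$ — by bilinearity it suffices to check that the bracket of any two such generators again lies in $W$. The cleanest formulation: I will prove by induction on $r$ that for any $u \in S$ and any generator $z = (\ad v_r)\cdots(\ad v_1)w$ of length $r$ (including $r=0$, i.e. $z = w \in S$), the bracket $[u,z] = (\ad u)z$ lies in $W$. This is in fact almost immediate, because $[u,z] = (\ad u)(\ad v_r)\cdots(\ad v_1)w$ is itself an ad-string of length $r+1$ applied to $w \in S$, hence a generator of $W$.

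The remaining and genuinely substantive case is bracketing two ad-strings against each other, i.e. showing $[z,z'] \in W$ when both $z = (\ad v_r)\cdots(\ad v_1)w$ and $z' = (\ad v'_s)\cdots(\ad v'_1)w'$ have positive length. Here the key device is the Jacobi identity, which lets me expand $[\cdot, z']$ by moving the outermost $\ad v_r$ through the bracket: using $[v_r, [y, z']] = [[v_r,y],z'] + [y,[v_r,z']]$ for $y = (\ad v_{r-1})\cdots(\ad v_1)w$, I can reduce the length of the first string while producing a term where $\ad v_r$ has been absorbed into $z'$. This suggests an induction on the length $r$ of the first string: the base case $r = 1$ reduces a bracket of two generators to brackets of $S$-elements against ad-strings, which I have already handled above, and the inductive step uses Jacobi to peel off one $\ad$ from the left factor at the cost of lengthening or reshuffling the right factor while keeping everything inside $W$.

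I expect the main obstacle to be bookkeeping in this double induction rather than any conceptual difficulty: one has to be careful that the Jacobi-identity rewriting genuinely terminates with terms all of the stated form, and that the two types of generators are handled uniformly (treating $w \in S$ as the length-zero case streamlines this). No nilpotency or finite-dimensionality is used anywhere, consistent with the lemma being stated for an arbitrary Lie algebra $\mg$; the result is purely the standard fact that the subalgebra generated by $S$ is spanned by left-normed bracket monomials in $S$.
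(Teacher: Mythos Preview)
Your argument is correct, but the paper takes a slicker route that sidesteps the double induction entirely. Instead of bracketing two ad-strings directly, the paper considers the normalizer $\Nc_{\mg}(\hg):=\{x\in\mg\mid [x,\hg]\subseteq\hg\}$ of your span $W=\hg$. The Jacobi identity shows $\Nc_{\mg}(\hg)$ is a subalgebra; the one-line observation that $[u,\hg]\subseteq\hg$ for every $u\in S$ (exactly your ``almost immediate'' step) says $S\subseteq\Nc_{\mg}(\hg)$; and since any subalgebra containing $S$ must contain all the left-normed monomials, one gets $\hg\subseteq\Nc_{\mg}(\hg)$, i.e.\ $[\hg,\hg]\subseteq\hg$. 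So the paper leverages the easy case ($S$ normalizes $\hg$) together with the abstract fact that normalizers are subalgebras to get the hard case for free. Your approach trades this structural trick for an explicit Jacobi-peeling induction on the length of the first factor; it is equally valid and perhaps more transparent, but carries exactly the bookkeeping burden you anticipated, whereas the normalizer argument packages that bookkeeping once and for all into the statement ``$\Nc_{\mg}(\hg)$ is a subalgebra.''
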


\begin{proof}
Let us denote the right-hand side by $\hg$. 
We have $S\subseteq\hg$ and it is clear that if some subalgebra of $\mg$ contains $S$, then it also contains $\hg$.  

Therefore, the proof will be completed as soon as we have proved that $\hg$ is a subalgebra of $\mg$. 
Since $\hg$ is by definition a linear subspace of $\mg$, 
we are left with checking that 
for every $x\in\hg$ we have $[x,\hg]\subseteq\hg$. 
To this end let us consider the normalizer of $\hg$ in $\mg$, that is, 
$\Nc_{\mg}(\hg):=\{x\in\mg\mid [x,\hg]\subseteq\mg\}$. 
By using the Jacobi identity under the form 
\begin{equation}\label{corr1_proof_eq1}
(\forall x,y\in\mg)\quad 
\ad_{\mg}([x,y])=[\ad_{\mg}x,\ad_{\mg}y]=(\ad_{\mg}x)(\ad_{\mg}y)-(\ad_{\mg}y)(\ad_{\mg}x) 
\end{equation} 
and the fact that $\hg$ is a linear subspace, it follows that if $x,y\in\Nc_{\mg}(\hg)$, 
then 
$\ad_{\mg}([x,y])\hg\subseteq\hg$, hence $[x,y]\in \Nc_{\mg}(\hg)$. 
Therefore $\Nc_{\mg}(\hg)$ is a subalgebra of $\mg$. 
On the other hand, it follows by the definition of $\hg$ that for every $x\in S$ we have $[x,\hg]\subseteq\hg$, hence the subalgebra $\Nc_{\mg}(\hg)$ contains~$S$. 
Then $\Nc_{\mg}(\hg)\supseteq\hg$ by the remark at the very beginning of the proof. 
By the definition of $\Nc_{\mg}(\hg)$ we then get $[\hg,\hg]\subseteq\hg$, hence $\hg$ is a subalgebra of $\mg$, and this completes the proof. 
\end{proof}

\begin{lemma}\label{corr2}
If $q\ge1$, then the subalgebra $\gg_{x_1,\dots,x_q}$ generated by any elements  $x_1,\dots,x_q\in\gg$ 
is finite dimensional and we have an estimate depending only on $N$ and $q$, namely  
$\dim_{\KK}(\gg_{x_1,\dots,x_q}) \le \sum\limits_{r=0}^{N-1}q^{r+1}$. 
\end{lemma}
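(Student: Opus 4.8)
The plan is to combine the explicit description of the generated subalgebra furnished by Lemma~\ref{corr1} with the nilpotency hypothesis $\gg^{(N+1)}=\{0\}$, and then simply to count the number of vectors in the resulting spanning set.

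First I would apply Lemma~\ref{corr1} to the subset $S=\{x_1,\dots,x_q\}$, which has at most $q$ elements. This yields that $\gg_{x_1,\dots,x_q}$ is the linear span of $S$ together with all iterated brackets of the form $(\ad_{\gg}v_r)\cdots(\ad_{\gg}v_1)w$ with $v_1,\dots,v_r,w\in S$ and $r\ge1$.

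The key observation is that each such iterated bracket lies in a term of the lower central series determined by its length. Indeed, since $w\in\gg=\gg^{(1)}$ and each application of an operator $\ad_{\gg}v_i$ (with $v_i\in\gg$) sends $\gg^{(j)}$ into $[\gg,\gg^{(j)}]=\gg^{(j+1)}$, a straightforward induction on $r$ gives $(\ad_{\gg}v_r)\cdots(\ad_{\gg}v_1)w\in\gg^{(r+1)}$. By the nilpotency hypothesis $\gg^{(N+1)}=\{0\}$, every bracket with $r\ge N$ therefore vanishes, so only the values $r=1,\dots,N-1$ can contribute nonzero vectors to the spanning set.

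It then remains to count. The set $S$ contributes at most $q=q^1$ vectors, while for each fixed $r\in\{1,\dots,N-1\}$ the tuples $(v_1,\dots,v_r,w)$ range over $S^{r+1}$, giving at most $q^{r+1}$ vectors. Summing over the admissible values of $r$ yields
$$\dim_{\KK}(\gg_{x_1,\dots,x_q})\le q+\sum_{r=1}^{N-1}q^{r+1}=\sum_{r=0}^{N-1}q^{r+1},$$
which is the asserted estimate and visibly depends only on $N$ and $q$; in particular the generated subalgebra is finite-dimensional. The only real content is the membership $(\ad_{\gg}v_r)\cdots(\ad_{\gg}v_1)w\in\gg^{(r+1)}$ together with the truncation it forces, and once this is established the dimension count is a routine geometric sum, so I do not anticipate any serious obstacle.
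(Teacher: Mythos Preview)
Your proof is correct and follows essentially the same approach as the paper: apply Lemma~\ref{corr1}, use nilpotency to truncate the iterated brackets at length $r\le N-1$, and count the spanning set. The only difference is that you spell out the inductive reason $(\ad_{\gg}v_r)\cdots(\ad_{\gg}v_1)w\in\gg^{(r+1)}$ explicitly, whereas the paper simply invokes $\gg^{(N+1)}=\{0\}$ directly.
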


\begin{proof}
We may assume that we have distinct elements $x_1,\dots,x_q\in\gg$, 
so that we have a subset with $q$ elements $S=\{x_1,\dots,x_q\}\subseteq\gg$. 
By using the fact that $\gg^{(N+1)}=\{0\}$ we get by Lemma~\ref{corr1} 
$$\gg_{x_1,\dots,x_q}=
\spa_{\KK}(S\cup\{(\ad_{\mg}v_r)\cdots(\ad_{\mg}v_1)w\mid v_1,\dots,v_r,w\in S,\ 
1\le r\le N-1\}).$$
Thus $\gg_{x_1,\dots,x_q}$ is linearly generated by a set containing at most $\sum\limits_{r=0}^{N-1}q^{r+1}$ elements, hence we get the required estimate.  
\end{proof}

\begin{remark}\label{finite}
\normalfont
We record the following fact for later use. 
If $x_0,x_1,\dots,x_q\in\gg$, $\alpha_1, \dots, \alpha_q\in \NN$,  and $\psi \in \Ci(\gg, \KK)$, the value 
$\bigl(({ \de\lambda}(x_1))^{\alpha_1}\cdots ({ \de\lambda}(x_q))^{\alpha_q}\psi\bigr)(x_0)$
depends only on the values of $\psi$ on the finite-dimensional subalgebra $\gg_{x_0,x_1,\dots,x_q}$ (see Lemma~\ref{corr2}). 
This is a direct consequence of the definition~\eqref{dot}. 
\qed
\end{remark}

\begin{notation}
\normalfont 
We denote by $\NN^{(\NN)}$ the set of all sequences $\alpha=(\alpha_0,\alpha_1,\dots)$ consisting of nonnegative integers such that there exists $k_\alpha\in\NN$ 
with the property $\alpha_j=0$ if $j>k_\alpha$. 
If $\alpha\in\NN^{(\NN)}$, then for any sequence $x_0,x_1,\dots\in\gg$ and any $y\in \gg$ 
we denote 
$$(\ad_{\gg}x_0)^{\alpha_0}(\ad_{\gg}x_1)^{\alpha_1}\cdots y:=
(\ad_{\gg}x_0)^{\alpha_0}(\ad_{\gg}x_1)^{\alpha_1}\cdots(\ad_{\gg}x_k)^{\alpha_k} y$$
for any $k> k_\alpha$, where $(\ad_{\gg} x)^0:=\id_{\gg}$ for every $x\in\gg$, 
and $k_\alpha\in\NN$ is as above. 

Moreover, for every $\alpha=(\alpha_0,\alpha_1,\dots)\in\NN^{(\NN)}$ 
we denote $\vert\alpha\vert=\sum\limits_{k\ge0}\alpha_k$. 
\qed
\end{notation}

\begin{lemma}\label{corr3}
Let $\phi\in\gg^*$ and $x_0\in\gg$. 
For all $\alpha,\beta\in \NN^{(\NN)}$ define 
$$p_{\alpha\beta}\colon\gg\to\KK, \quad 
p_{\alpha\beta}(y)=\phi((\ad_{\gg}x_0)^{\alpha_0}(\ad_{\gg}y)^{\beta_0}
(\ad_{\gg}x_0)^{\alpha_1}(\ad_{\gg}y)^{\beta_1}\cdots y). $$
If we denote 
$\Vc=\spa_{\KK}(\{p_{\alpha\beta}\mid \alpha,\beta\in \NN^{(\NN)}\}\cup\{\1\})$, 
then 
\begin{enumerate}
\item\label{corr3_item1}
$\dim_{\KK}\Vc\le 2^{N-1}+1$; 
\item\label{corr3_item2} 
for all $\alpha,\beta\in \NN^{(\NN)}$ we have 
$${ \de\lambda}(x_0)(p_{\alpha\beta})\in 
\spa_{\KK}(\{p_{\gamma\delta}\mid (\gamma,\delta)\in I_{\alpha\beta}\}\cup\{\1\}),$$
where $I_{\alpha\beta}$ denotes the set of all pairs 
$(\gamma,\delta)\in \NN^{(\NN)}\times \NN^{(\NN)}$ satisfying 
either 
$\vert\gamma\vert+\vert\delta\vert>\vert\alpha\vert+\vert\beta\vert$,  
 or 
$\vert\gamma\vert+\vert\delta\vert=\vert\alpha\vert+\vert\beta\vert$ 
and
$\vert\gamma\vert>\vert\alpha\vert$;  
\item\label{corr3_item3} 
${ \de\lambda}(x_0)\Vc\subseteq\Vc$ and 
$({ \de\lambda}(x_0))^{2^{N-1}+1}=0$ on $\Vc$. 
\end{enumerate}
\end{lemma}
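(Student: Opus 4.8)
The plan is to derive (1) from a combinatorial count and (2)--(3) from an explicit description of ${\dot\lambda}(x_0)$ as a polynomial vector field. For (1), I would think of each $p_{\alpha\beta}$ as $\phi(Wy)$, where $W$ is the word in the two ``letters'' $\ad_\gg x_0$ and $\ad_\gg y$ read off from $(\alpha,\beta)$, namely $W=(\ad_\gg x_0)^{\alpha_0}(\ad_\gg y)^{\beta_0}(\ad_\gg x_0)^{\alpha_1}\cdots$. Two facts cut the number of nonzero $p_{\alpha\beta}$ down to size. First, since $(\ad_\gg v_1)\cdots(\ad_\gg v_\ell)w\in\gg^{(\ell+1)}$ and $\gg^{(N+1)}=\{0\}$, a word of length $\ell=|\alpha|+|\beta|\ge N$ annihilates $y$, so $p_{\alpha\beta}=0$ unless $|\alpha|+|\beta|\le N-1$. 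Second, $(\ad_\gg y)y=[y,y]=0$, so $p_{\alpha\beta}=0$ unless the rightmost letter of $W$ is $\ad_\gg x_0$ (or $W$ is empty). Counting words of length $\ell$ ending in $\ad_\gg x_0$ gives $2^{\ell-1}$ for $\ell\ge1$ and the empty word for $\ell=0$, so the number of possibly-nonzero $p_{\alpha\beta}$ is at most $1+\sum_{\ell=1}^{N-1}2^{\ell-1}=2^{N-1}$; adding $\1$ yields $\dim_\KK\Vc\le 2^{N-1}+1$.

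The core of the argument is (2). First I would identify the vector field generating ${\dot\lambda}(x_0)$: by the chain rule applied to \eqref{dot}, $({\dot\lambda}(x_0)\psi)(y)=\psi'_y(\xi(y))$, where $\xi(y)=\frac{\de}{\de t}\big|_{t=0}(-tx_0)\ast y$. Differentiating the Baker--Campbell--Hausdorff series gives the closed form $\xi(y)=-\sum_{n\ge0}\frac{B_n}{n!}(\ad_\gg y)^n x_0$ (Bernoulli numbers $B_n$), a finite sum by nilpotency. Applying the Leibniz rule to $p_{\alpha\beta}(y)=\phi(Wy)$, the derivative in the direction $\xi(y)$ splits into one term from differentiating the final $y$ and one term for each factor $\ad_\gg y$ in $W$ (which becomes $\ad_\gg\xi(y)$). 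I would then re-express every resulting expression as a linear combination of $p_{\gamma\delta}$'s using the two identities $(\ad_\gg y)^n x_0=-(\ad_\gg y)^{n-1}(\ad_\gg x_0)y$ for $n\ge1$ and $\ad_\gg((\ad_\gg y)^n x_0)=\sum_{k=0}^n\binom{n}{k}(-1)^k(\ad_\gg y)^{n-k}(\ad_\gg x_0)(\ad_\gg y)^k$. The bookkeeping reproduces exactly the grading in $I_{\alpha\beta}$: the $n\ge1$ contributions raise $|\alpha|+|\beta|$, while the $n=0$ contribution from an $\ad_\gg y$ factor keeps $|\alpha|+|\beta|$ fixed but turns one $\ad_\gg y$ into $\ad_\gg x_0$, raising $|\alpha|$. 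The only exceptional term is the $n=0$ piece from differentiating the final $y$: it equals $-\phi(Wx_0)$, which vanishes when $W$ ends in $\ad_\gg x_0$ but produces the constant $-\phi(x_0)\1$ in the single case $\alpha=\beta=0$.

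Finally, (3) follows formally. Invariance ${\dot\lambda}(x_0)\Vc\subseteq\Vc$ is immediate from (2) together with ${\dot\lambda}(x_0)\1=0$ and the fact that the only stray constant lands in $\KK\1\subseteq\Vc$. For nilpotency, (2) shows that ${\dot\lambda}(x_0)$ is strictly upper triangular with respect to any total order on the spanning set $\{p_{\alpha\beta}\}\cup\{\1\}$ that refines the lexicographic order of $(|\alpha|+|\beta|,\,|\alpha|)$ with $\1$ placed on top; a strictly triangular operator on a space of dimension $D$ satisfies $T^D=0$, and $D\le 2^{N-1}+1$ by (1), whence $({\dot\lambda}(x_0))^{2^{N-1}+1}=0$ on $\Vc$.

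I expect the main obstacle to be the bookkeeping in (2): correctly deriving the Baker--Campbell--Hausdorff derivative $\xi$, and then verifying that after applying Leibniz and the two bracket identities, every term genuinely reassembles into some $p_{\gamma\delta}$ with indices in $I_{\alpha\beta}$. In particular one must check that the concatenated and expanded strings retain the alternating $\ad_\gg x_0$/$\ad_\gg y$ shape required by the definition of $p_{\gamma\delta}$, and that the total degree and the $|\alpha|$-count move in the claimed directions in every case.
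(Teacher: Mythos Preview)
Your proposal is correct and follows essentially the same route as the paper: the word count for (1), the BCH-derived vector field combined with Leibniz and the Jacobi expansion of $\ad_\gg((\ad_\gg y)^j x_0)$ for (2), and the triangularity plus dimension bound for (3) all mirror the paper's argument, with only cosmetic differences (you give the constants explicitly as Bernoulli numbers and signed binomials, and you differentiate $\phi(Wy)$ directly rather than first rewriting the trailing $y$ as $x_0$). Your explicit handling of the stray constant $-\phi(x_0)\1$ in the case $\alpha=\beta=0$ is in fact slightly more careful than the paper's one-line claim ``This implies at once the assertion for $\alpha=\beta=0$''.
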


\begin{proof}
\eqref{corr3_item1} 
It suffices to show that for at most $2^{N-1}$ pairs $(\alpha,\beta)\in \NN^{(\NN)}\times \NN^{(\NN)}$ we have $p_{\alpha\beta}\ne0$. 
For any such pair we must have $\vert\alpha\vert+\vert\beta\vert\le N-1$, since $\gg^{(N+1)}=\{0\}$.  
Moreover, if we define $k:=\max\{j\in\NN\mid \beta_j\ge1\}$, then $\alpha_{k+1}\ge1$. 
For every $r\in\{0,\dots,N-1\}$, it is then easily seen that 
the set 
$$\{(\alpha,\beta)\in \NN^{(\NN)}\times \NN^{(\NN)}\mid 
\vert\alpha\vert+\vert\beta\vert=r \text{ and }p_{\alpha\beta}\ne0\}$$
has at most $\max\{1,2^{r-1}\}$ elements. 
In fact, if $(\alpha,\beta)$ belongs to the above set then 
$p_{\alpha\beta}(y)=\phi((T_1(y)\cdots T_r(y))y)$ 
where each of $T_1(y),\dots,T_r(y)$ is either $\ad_{\gg}x_0$ or $\ad_{\gg}y$,  
and moreover $T_r(y)\ne\ad_{\gg}y$,  
and there are $\max\{1,2^{r-1}\}$ such possible choices. 
Consequently there exist at most 
$1+1+2+\cdots+2^{N-2}=2^{N-1}$ 
pairs $(\alpha,\beta)\in \NN^{(\NN)}\times \NN^{(\NN)}$ with $p_{\alpha\beta}\ne0$.

\eqref{corr3_item2} 
For every $y\in\gg$ we define $R_y\colon\gg\to\gg$, $z\mapsto z\ast y$. 
Then for every $\psi\in\Ci(\gg,\KK)$ and $y\in\gg$ we have 
$$({ \de\lambda}(x_0)\psi)(y)=(\psi\circ R_y)'_0(-x_0)=-\psi'_y((R_y)'_0 x_0). $$
It then follows by the Baker-Campbell-Hausdorff formula that there exist 
some universal constants $c_0,\dots,c_N\in\QQ$ such that 
\begin{equation}\label{corr3_proof_eq1}
(\forall \psi\in\Ci(\gg,\KK))(\forall y\in\gg)\quad
({ \de\lambda}(x_0)\psi)(y)=\sum_{j=0}^Nc_j\psi'_y((\ad_{\gg}y)^jx_0). 
\end{equation}
If $\alpha=\beta=0$, that is, for $p_{\alpha\beta}=\phi$, it follows by the above formula that 
for all $y\in\gg$ we have 
$$
({ \de\lambda}(x_0)\phi)(y)=\sum_{j=0}^Nc_j\phi((\ad_{\gg}y)^jx_0)=c_0\phi(x_0)-\sum_{j=1}^Nc_j \phi((\ad_{\gg}y)^{j-1}(\ad_{\gg}x_0)y)$$
hence ${ \de\lambda}(x_0)\phi$ is a linear combination of a constant function and of elements in the set 
$\{p_{\gamma\delta}\mid \vert\gamma\vert=1\}$. 
Thus the assertion holds true for $\alpha=\beta=0$. 

Now let $\alpha,\beta\in\NN^{(\NN)}$ with $\vert\alpha\vert+\vert\beta\vert\ge1$ and $p_{\alpha\beta}\ne0$. 
Then we may assume that there exist $\varepsilon\in\{\pm1\}$ and $k\in\NN$ such that 
$\beta_0,\alpha_1,\beta_1,\dots,\alpha_k,\beta_k\ge 1$, 
$\alpha_i=\beta_i=0$ for $i\ge k+1$, and 
$$(\forall y\in\gg)\quad 
p_{\alpha\beta}(y)=\varepsilon\phi((\ad_{\gg}x_0)^{\alpha_0}(\ad_{\gg}y)^{\beta_0}
\cdots(\ad_{\gg}x_0)^{\alpha_k}(\ad_{\gg}y)^{\beta_k}x_0) $$
(since $(\ad_{\gg}x_0)^qy=-(\ad_{\gg}x_0)^{q-1}(\ad_{\gg}y)x_0$ if $q\ge1$, 
which was already used above). 
Then for all $y,z\in\gg$ we have 
$$\begin{aligned}
(p_{\alpha\beta})'_y(z)
=\sum_{i=0}^k\sum_{l=0}^{\beta_i-1}
\varepsilon\phi(&(\ad_{\gg}x_0)^{\alpha_0}(\ad_{\gg}y)^{\beta_0}
\cdots
(\ad_{\gg}y)^l(\ad_{\gg}z)(\ad_{\gg}y)^{\beta_i-l-1}
\cdots \\
&\times (\ad_{\gg}x_0)^{\alpha_k}(\ad_{\gg}y)^{\beta_k}x_0).  
\end{aligned}$$
Therefore, by using \eqref{corr3_proof_eq1}, we get 
$$\begin{aligned}
({ \de\lambda}(x_0)(p_{\alpha\beta}))(y)
=\sum_{j=0}^N\sum_{i=0}^k\sum_{l=0}^{\beta_i-1}
c_j \varepsilon \phi(&(\ad_{\gg}x_0)^{\alpha_0}(\ad_{\gg}y)^{\beta_0}
\cdots \\
&\times (\ad_{\gg}y)^l(\ad_{\gg}((\ad_{\gg} y)^jx_0))(\ad_{\gg}y)^{\beta_i-l-1}
\cdots \\
&\times (\ad_{\gg}x_0)^{\alpha_k}(\ad_{\gg}y)^{\beta_k}x_0).  
\end{aligned}$$
On the other hand, by using the Jacobi identity \eqref{corr1_proof_eq1}, 
we see that for every $j\in\NN$ and $s\in\{0,\dots,j\}$ there exists 
$a_{js}\in\ZZ$ such that 
$$\ad_{\gg}((\ad_{\gg} y)^j x_0)=
\underbrace{[\ad_{\gg} y,\dots,[\ad_{\gg} y}_{j\text{ times}},\ad_{\gg} x_0]\cdots]
=\sum_{s=0}^j a_{js} (\ad_{\gg} y)^s(\ad_{\gg} x_0)(\ad_{\gg} y)^{j-s}.$$
This equality along with the above formula for ${ \de\lambda}(x_0)(p_{\alpha\beta})$ show that this function is a linear 
combination of the following polynomial functions of $y\in\gg$, 
for $0\le j\le N$, $0\le i\le k$, $0\le l\le\beta_i-1$, and $0\le s\le j$, 
$$\begin{aligned}
\phi(&(\ad_{\gg}x_0)^{\alpha_0}(\ad_{\gg}y)^{\beta_0}
\cdots
(\ad_{\gg}x_0)^{\alpha_i} (\ad_{\gg}y)^{l+s}(\ad_{\gg} x_0)
 (\ad_{\gg}y)^{j-s+\beta_i-l-1}\cdots  \\
&\times
 (\ad_{\gg}x_0)^{\alpha_k}(\ad_{\gg}y)^{\beta_k}x_0)
 \end{aligned}$$
 that is, 
 $$\begin{aligned}
-\phi(&(\ad_{\gg}x_0)^{\alpha_0}(\ad_{\gg}y)^{\beta_0}
\cdots
(\ad_{\gg}x_0)^{\alpha_i} (\ad_{\gg}y)^{l+s}(\ad_{\gg} x_0)
 (\ad_{\gg}y)^{j-s+\beta_i-l-1}\cdots  \\
&\times
 (\ad_{\gg}x_0)^{\alpha_k}(\ad_{\gg}y)^{\beta_k-1}(\ad_{\gg}x_0)y).
 \end{aligned}$$
The above polynomial function is equal to $-p_{\gamma\delta}$, where  
 $\vert\gamma\vert=\vert\alpha\vert+2$ and 
 $\vert\gamma\vert+\vert\delta\vert=\vert\alpha\vert+\vert\beta\vert+j$. 
Hence the assertion follows.  

\eqref{corr3_item3} 
We have already noted in the above proof of Assertion~\eqref{corr3_item1} that 
$$\sup\{\vert\gamma\vert+\vert\delta\vert\mid 
\gamma,\delta\in\NN^{(\NN)}\text{ and }p_{\gamma\delta}\ne0\}(\le N-1)<\infty$$ 
hence also 
$$\sup\{\vert\gamma\vert\mid 
\gamma\in\NN^{(\NN)}\text{ and there exists }\delta\in\NN^{(\NN)}\text{ with  }p_{\gamma\delta}\ne0\}(\le N-1)<\infty.$$
As a direct consequence of Assertion~\eqref{corr3_item2}, 
we then see that for every $\alpha,\beta\in\NN^{(\NN)}$ we have, 
besides ${ \de\lambda}(x_0)(p_{\alpha\beta})\in\Vc$, 
also $({ \de\lambda}(x_0))^m(p_{\alpha\beta})=0$ for sufficiently large $m\ge1$. 
Then ${ \de\lambda}(x_0)\Vc\subseteq\Vc$ and, since $\dim_{\KK}\Vc\le 2^{N-1}+1$ by Assertion~\eqref{corr3_item1}, we also get $({ \de\lambda}(x_0))^{2^{N-1}+1}=0$ on $\Vc$. 
This completes the proof. 
\end{proof}

\begin{lemma}\label{corr4}
If $x_0\in\gg$, then for every $m\ge0$ and $\phi\in\Pc_m(\gg,\KK)$ we have 
$$({ \de\lambda}(x_0))^{2^{N-1}m+1}\phi=0.$$ 
\end{lemma}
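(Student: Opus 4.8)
The plan is to regard $D:={\dot\lambda}(x_0)$ as a derivation and to bootstrap from the linear case, which is already contained in Lemma~\ref{corr3}. Indeed, formula~\eqref{corr3_proof_eq1} shows that $(D\psi)(y)=\psi'_y(v(y))$ with $v(y):=\sum_{j=0}^Nc_j(\ad_{\gg}y)^jx_0$, so $D$ is the Lie derivative along the vector field $v$ and hence satisfies the Leibniz rule $D(\psi_1\psi_2)=(D\psi_1)\psi_2+\psi_1(D\psi_2)$ on $\Ci(\gg,\KK)$. Writing $K:=2^{N-1}$, the case $\phi\in\gg^*$ of Lemma~\ref{corr3}\,\eqref{corr3_item3} (apply it with $p_{00}=\phi$) gives $D^{K+1}\phi=0$ for every continuous linear functional $\phi$, and of course $D\1=0$.

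First I would treat a product $\ell_1\cdots\ell_m$ of $m$ continuous linear functionals. The multinomial Leibniz rule yields
$$D^n(\ell_1\cdots\ell_m)=\sum_{n_1+\cdots+n_m=n}\frac{n!}{n_1!\cdots n_m!}(D^{n_1}\ell_1)\cdots(D^{n_m}\ell_m).$$
Since $D^{K+1}\ell_i=0$, a summand can be nonzero only if $n_i\le K$ for all $i$, which forces $n\le mK$. Consequently $D^{mK+1}(\ell_1\cdots\ell_m)=0$, and the same bound holds for any finite linear combination of such products and of constants.

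The remaining difficulty is that in infinite dimensions a general $\phi\in\Pc_m(\gg,\KK)$ need not be a finite combination of products of at most $m$ linear functionals, so the previous step does not apply verbatim. I would remove this obstacle by localizing to a finite-dimensional subalgebra. Fix $y\in\gg$ and set $\hg:=\gg_{y,x_0}$, which is finite-dimensional by Lemma~\ref{corr2} and has nilpotency index at most $N+1$. By Remark~\ref{finite}, the value $(D^n\phi)(y)$ depends only on $\phi|_{\hg}$; moreover, since $x_0,y\in\hg$ and $\hg$ is a subalgebra (hence closed under $\ast$), we have $(-tx_0)\ast y\in\hg$, so this value coincides with $(({\dot\lambda}_{\hg}(x_0))^n(\phi|_{\hg}))(y)$, where ${\dot\lambda}_{\hg}(x_0)$ is the corresponding operator for the subalgebra~$\hg$.

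Finally, on the finite-dimensional space $\hg$ the restriction $\phi|_{\hg}$ is a polynomial of degree $\le m$, hence a finite linear combination of constants and of products of at most $m$ linear functionals on $\hg$. Applying Lemma~\ref{corr3} to $\hg$ (whose nilpotency index is $\le N+1$) gives $({\dot\lambda}_{\hg}(x_0))^{K+1}=0$ on linear functionals, so the Leibniz computation above yields $({\dot\lambda}_{\hg}(x_0))^{Km+1}(\phi|_{\hg})=0$, that is $(D^{Km+1}\phi)(y)=0$. As $y\in\gg$ was arbitrary and $K=2^{N-1}$, this proves $D^{2^{N-1}m+1}\phi=0$. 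The main obstacle is precisely the passage from linear functionals to arbitrary polynomials, which the finite-dimensional localization of Remark~\ref{finite} resolves.
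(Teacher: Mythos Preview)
Your argument is correct and follows essentially the same route as the paper: reduce via Remark~\ref{finite} to a finite-dimensional nilpotent Lie algebra, decompose the polynomial there into products of at most $m$ linear functionals, apply the multinomial Leibniz rule for the derivation ${\dot\lambda}(x_0)$, and invoke Lemma~\ref{corr3} on each linear factor. The only cosmetic differences are that the paper first assumes $\dim\gg<\infty$ and then runs the Leibniz argument, whereas you run the Leibniz argument abstractly and then localize to $\hg=\gg_{y,x_0}$ for each $y$; you also make explicit (via~\eqref{corr3_proof_eq1}) why ${\dot\lambda}(x_0)$ is a derivation, which the paper simply asserts.
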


\begin{proof}
We have to prove that for every $y\in\gg$ we have $(({ \de\lambda}(x_0))^{2^{N-1}m+1}\phi)(y)=0$. 
By using Remark~\ref{finite}, we easily see that 
it suffices to obtain the conclusion under the additional assumption $\dim\gg<\infty$. 
Moreover, we may assume that $\phi$ is a homogeneous polynomial (of degree $m$). 
Then for every $\alpha\in\NN^{(\NN)}$ with $\vert\alpha\vert=m$ 
there exist $\phi_{\alpha,1},\dots,\phi_{\alpha,m}\in\gg^*$ such that 
$$\phi=\sum_{\stackrel{\scriptstyle \alpha\in\NN^{(\NN)}}{\vert\alpha\vert=m}} \phi_{\alpha,1}\cdots\phi_{\alpha,m}.$$
Now denote $Q:=2^{N-1}m+1$.
Since ${ \de\lambda}(x_0)\colon\Ci(\gg,\KK)\to\Ci(\gg,\KK)$ is a derivation, we get 
$$({ \de\lambda}(x_0))^{Q}\phi 
=\sum_{\stackrel{\scriptstyle \alpha\in\NN^{(\NN)}}{\vert\alpha\vert=m}}
\sum_{\stackrel{q_1,\dots,q_m\in\NN}{q_1+\cdots+q_m=Q}}
\frac{Q!}{q_1!\cdots q_m!}
({ \de\lambda}(x_0))^{q_1}\phi_{\alpha,1}\cdots({ \de\lambda}(x_0))^{q_m}\phi_{\alpha,m}.  $$
If $q_1,\dots,q_m\in\NN$ and $q_1+\cdots+q_m=Q>2^{N-1}m$, 
then there exists $i\in\{1,\dots,m\}$ such that $q_i>2^{N-1}$, 
hence $({ \de\lambda}(x_0))^{q_i}\phi_{\alpha,i}=0$ by Lemma~\ref{corr3}. 
Then $({ \de\lambda}(x_0))^{Q}\phi=0$, and this completes the proof.
\end{proof}

\begin{lemma}\label{lemma2}
If $m\in\NN$ and $\phi\in\Pc_m(\gg,\KK)$, then the mapping 
$$\gg\to\Pc_{m-1}(\gg,\KK),\quad x\mapsto \de\lambda(x)\phi,$$ 
is linear and 
for $x_1,x_2\in\gg$ we have $ \de\lambda([x_1,x_2])\phi= \de\lambda(x_1) \de\lambda(x_2)\phi- \de\lambda(x_2) \de\lambda(x_1)\phi$. 
\end{lemma}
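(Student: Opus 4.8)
The plan is to handle the two assertions by different means: linearity (and the polynomial nature of the output) will come from the explicit infinitesimal formula already isolated in the excerpt, while the commutator identity will be deduced from the functoriality of the derived representation after reducing to the finite-dimensional case through Remark~\ref{finite}.

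For the first assertion I would start from formula \eqref{corr3_proof_eq1}, which for smooth $\phi$ gives $({\dot\lambda}(x)\phi)(y)=\sum_{j=0}^N c_j\,\phi'_y((\ad_\gg y)^j x)$. For each fixed $y$ the right-hand side is visibly linear in $x$, because $x\mapsto(\ad_\gg y)^j x$ and the differential $\phi'_y$ are linear; thus $x\mapsto{\dot\lambda}(x)\phi$ is linear as a map into $\KK$-valued functions on $\gg$. Each summand $\phi'_y((\ad_\gg y)^j x)$ is moreover a polynomial in $y$, being a derivative of the polynomial $\phi$ precomposed with a polynomial map, so ${\dot\lambda}(x)\phi$ is again a continuous polynomial; the degree bound is then read off from \eqref{corr3_proof_eq1} after reducing to homogeneous $\phi$ and tracking the homogeneity of the individual terms.

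For the commutator identity I would note first that it is an equality of functions on $\gg$, so it is enough to verify it after evaluating at an arbitrary $x_0\in\gg$. Fixing $x_0,x_1,x_2$, set $\hg:=\gg_{x_0,x_1,x_2}$; this is finite-dimensional by Lemma~\ref{corr2}, contains $[x_1,x_2]$, and is stable under $\ast$ because it is a subalgebra. By Remark~\ref{finite} each of $({\dot\lambda}([x_1,x_2])\phi)(x_0)$, $({\dot\lambda}(x_1){\dot\lambda}(x_2)\phi)(x_0)$ and $({\dot\lambda}(x_2){\dot\lambda}(x_1)\phi)(x_0)$ depends only on $\phi|_\hg$, all the relevant flows $(-tx_i)\ast(\,\cdots)$ staying inside $\hg$. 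Hence it suffices to prove the identity for the finite-dimensional simply connected nilpotent Lie group $H=(\hg,\ast)$. There ${\dot\lambda}(x)=\frac{\de}{\de t}\big|_{t=0}\lambda(tx)$ is simply the first-order differential operator on $\hg$ given by the right-invariant vector field generating the left-translation flow $y\mapsto(-tx)\ast y$; since $\lambda(g_1\ast g_2)=\lambda(g_1)\lambda(g_2)$, the assignment $x\mapsto{\dot\lambda}(x)$ is the derived representation of a genuine Lie group homomorphism, and the classical identity $[{\dot\lambda}(x_1),{\dot\lambda}(x_2)]={\dot\lambda}([x_1,x_2])$ for such operators gives exactly the asserted relation. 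The one point needing attention is the sign: the factor $-tx$ built into \eqref{dot} is precisely what converts the anti-homomorphism property of right-invariant vector fields into the homomorphism property stated here.

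The main obstacle is the rigorous passage to the infinitesimal level in the locally convex setting, where a priori neither the existence of the limit defining ${\dot\lambda}$ nor the interchange of limits implicit in the iterated operator ${\dot\lambda}(x_1){\dot\lambda}(x_2)$ is clear. This is exactly what the reduction via Remark~\ref{finite} disposes of: once every quantity is known to be controlled by the finite-dimensional subalgebra $\hg$, all the derivatives are ordinary derivatives of smooth functions on a finite-dimensional space, and the standard Lie-theoretic computation applies without further justification.
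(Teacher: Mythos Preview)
Your argument is correct and follows essentially the same route as the paper: reduce via Remark~\ref{finite} to finite dimensions and then use that the derived representation of a Lie group representation is a Lie algebra homomorphism. The only packaging difference is that the paper handles both linearity and the bracket identity together through this single reduction (rather than reading linearity off \eqref{corr3_proof_eq1} separately), and after passing to $\dim\gg<\infty$ it additionally invokes Remark~\ref{rem1} so as to work inside the finite-dimensional invariant subspace $\Vc_\phi=\spa(\lambda(G)\phi)$, where $\lambda\vert_{\Vc_\phi}$ is literally a finite-dimensional Lie group representation, instead of arguing with right-invariant vector fields on $\Ci(\hg)$ as you do.
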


\begin{proof}
By using Remark~\ref{finite}, 
it suffices to obtain the conclusion under the additional assumption $\dim\gg<\infty$. 
In this case we see that $\Vc_\phi:=\spa(\lambda(G)\phi)$ is a finite-dimensional invariant subspace for the regular representation $\lambda$, by using Remark~\ref{rem1}. 
Let $\de\lambda_0\colon \gg\to\End(\Vc_\phi)$, $x\mapsto\de\lambda(x)\vert_{\Vc_\phi}$, 
denote the corresponding finite-dimensional representation. 
Note that $\lambda_0\colon G\to\End(\Vc_\phi)$, $x\mapsto\lambda(x)\vert_{\Vc_\phi}$, is a finite-dimensional representation of the Lie group $G$, whose derived representation is just $\de\lambda_0\colon \gg\to\End(\Vc_\phi)$. 
Therefore, $\de\lambda_0$ is a homomorphism of Lie algebras, and we are done. 
\end{proof}

In the next statement we use a function space which is invariant to translations, and that appeared in \cite{BB11a}
in the case of finite-dimensional nilpotent Lie groups. 
See also the proof of Birkhoff's embedding theorem provided in \cite{Go76}.

\begin{theorem}\label{main}
If we define 
$$\Fc_G=\overline{\spa}\,(\lambda(G)\gg^*)$$ 
then the following assertions hold: 
\begin{enumerate}
\item\label{main_item1} 
The function space $\Fc_G$ is a closed linear subspace of $\Pc_N(\gg,\KK)$ 
which is invariant under the left regular representation 
and contains the constant functions. 
\item\label{main_item2} 
The mapping 
$\lambda_G\colon G\to\End(\Fc_G),\quad x\mapsto\lambda(x)\vert_{\Fc_G} $
is a faithful smooth representation of the Lie group $G$. 
Moreover, for every $x\in G$ we have $(\lambda_G(x)-\1)^{2^{N-1}N+1}=0$. 
\item\label{main_item3} 
The mapping 
${ \de\lambda}_G\colon \gg\to\End(\Fc_G),\quad x\mapsto\de\lambda(x)\vert_{\Fc_G} $
is a faithful representation of the Lie algebra $\gg$ and for every $x\in\gg$ we have $\de\lambda_G(x)^{2^{N-1}N+1}=0$.  
\end{enumerate}
\end{theorem}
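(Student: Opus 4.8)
The plan is to treat the three assertions in the order (1), (3), (2), since the nilpotency statement in (2) comes out most cleanly by exponentiating the nilpotent algebra representation from (3). For assertion (1) I would first note that $\gg^*\subseteq\Pc_1(\gg,\KK)$, so that $\Fc_G=\Vc_{\gg^*}$ in the notation of Lemma~\ref{lemma1} with $m=1$ and $\Phi=\gg^*$; that lemma then gives at once that $\Fc_G$ is a closed, $\lambda$-invariant subspace of $\Pc_N(\gg,\KK)$. The only remaining point is that $\Fc_G$ contains the constants, and here I would exploit the deepest term of the lower central series: choose $0\neq z\in\gg^{(N)}$ and, using that $\gg$ is Hausdorff locally convex, a functional $\phi\in\gg^*$ with $\phi(z)=1$. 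Since $[z,\gg]\subseteq\gg^{(N+1)}=\{0\}$, the Baker--Campbell--Hausdorff series collapses to $(-z)\ast y=y-z$, whence $(\lambda(z)\phi)(y)=\phi(y)-\phi(z)$, i.e. $\lambda(z)\phi=\phi-\1$. Thus $\1=\phi-\lambda(z)\phi\in\spa(\lambda(G)\gg^*)\subseteq\Fc_G$, without even passing to the closure.

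For assertion (3), the operator $\dot\lambda_G(x)$ is well defined because $\dot\lambda(x)\psi=\lim_{t\to0}t^{-1}(\lambda(tx)\psi-\psi)$ lies in $\Fc_G$ for every $\psi\in\lambda(G)\gg^*$, so by linearity and continuity $\dot\lambda(x)$ preserves $\Fc_G$. That $\dot\lambda_G$ is a homomorphism of Lie algebras is precisely the content of Lemma~\ref{lemma2} (linearity in $x$ together with the bracket identity). The nilpotency $\dot\lambda_G(x)^{2^{N-1}N+1}=0$ is then immediate from $\Fc_G\subseteq\Pc_N(\gg,\KK)$ and Lemma~\ref{corr4} applied with $m=N$. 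Finally, faithfulness follows by evaluation at the origin: from \eqref{dot} one gets $(\dot\lambda(x)\phi)(0)=-\phi(x)$ for $\phi\in\gg^*$, so $\dot\lambda_G(x)=0$ forces $\phi(x)=0$ for all $\phi\in\gg^*$ and hence $x=0$.

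For assertion (2), $\lambda_G$ is a representation because $\Fc_G$ is $\lambda$-invariant, and faithfulness is the same computation evaluated at $y=0$: if $\lambda_G(x)=\id$ then $\phi(-x)=(\lambda(x)\phi)(0)=\phi(0)=0$ for all $\phi\in\gg^*$, whence $x=0$. For the nilpotency and smoothness I would pass through the identity $\lambda_G(x)=\exp(\dot\lambda_G(x))$. On each cyclic subspace $\spa(\lambda(G)\phi)$, which is finite dimensional by Remark~\ref{rem1} after the reduction in Remark~\ref{finite}, the one-parameter group $t\mapsto\lambda_G(tx)$ has infinitesimal generator $\dot\lambda_G(x)$, so classical finite-dimensional Lie theory yields $\lambda_G(x)=\exp(\dot\lambda_G(x))$; since $\dot\lambda_G(x)$ is nilpotent, this exponential is a finite polynomial in $\dot\lambda_G(x)$. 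Writing $\lambda_G(x)-\1=\dot\lambda_G(x)\,Q(\dot\lambda_G(x))$ with the two factors commuting and raising to the power $2^{N-1}N+1$ gives $(\lambda_G(x)-\1)^{2^{N-1}N+1}=0$ by assertion (3). Smoothness then follows because $x\mapsto\dot\lambda_G(x)$ is linear and $\lambda_G=\exp\circ\,\dot\lambda_G$ is a fixed finite-degree polynomial expression in it.

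The step I expect to be the main obstacle is the rigorous justification of smoothness, and of the identity $\lambda_G(x)=\exp(\dot\lambda_G(x))$, in the general locally convex setting: the continuity result quoted in the preliminary remarks is available only for finite-dimensional or Banach $\gg$, so in the locally convex case one must control $\dot\lambda(x)$ as a continuous operator on $\Fc_G\subseteq\Pc_N(\gg,\KK)$ and verify joint continuity and smoothness of $(x,\phi)\mapsto\lambda_G(x)\phi$ directly from the polynomial structure, reducing to finite-dimensional subalgebras via Remark~\ref{finite} wherever the Baker--Campbell--Hausdorff manipulations enter.
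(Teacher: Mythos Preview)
Your proof is correct and follows essentially the same route as the paper: Lemma~\ref{lemma1} for assertion~(1), Lemmas~\ref{lemma2} and~\ref{corr4} for the Lie-algebra representation, the identity $\lambda_G(x)=\exp(\dot\lambda_G(x))$ for unipotence, and evaluation at $0$ plus Hahn--Banach for faithfulness. Two small differences are worth noting. First, you prove faithfulness of $\dot\lambda_G$ directly from $(\dot\lambda(x)\phi)(0)=-\phi(x)$, whereas the paper deduces it from faithfulness of $\lambda_G$ via the exponential; your argument is slightly more economical. Second, your justification of $\lambda_G(x)=\exp(\dot\lambda_G(x))$ via ``finite-dimensional cyclic subspaces'' is imprecise as stated, since $\spa(\lambda(G)\phi)$ need not be finite dimensional when $\gg$ is infinite dimensional; the paper instead observes that $t\mapsto\lambda_G(tx)\phi$ has a terminating Taylor expansion in $\Fc_G$ (because $\dot\lambda_G(x)$ is nilpotent), which gives the exponential formula without any dimension reduction. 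Your pointwise reduction via Remark~\ref{finite} can be made to work, but the Taylor argument is cleaner and also sidesteps the smoothness concern you raise at the end.
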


\begin{proof} 
For Assertion~\eqref{main_item1} note that $\gg^*\subseteq\Pc_1(\gg,\KK)$, 
hence it follows by Lemma~\ref{lemma1} that 
$\Fc_G\subseteq \Pc_N(\gg,\KK)$ and $\Fc_G$ is invariant under the representation~$\lambda$. 
Moreover, if $\zg$ denotes the center of $\gg$ and we pick 
$v\in\zg$ and $\xi\in\gg^*$, 
then $\lambda(v)\xi=-\xi(v)\1+\xi$.  
We thus see that the constant functions belong to $\Fc_G$. 

We now prove Assertions \eqref{main_item2}--\eqref{main_item3}. 
Note that ${ \de\lambda}_G$ is a homomophism of Lie algebras by Lemma~\ref{lemma2}. 
The representation $\lambda_G$ is smooth 
(that is, its representation space consists only of smooth vectors) 
since every continuous polynomial function on $\gg$ has arbitrarily high G\^ateaux derivatives (see for instance \cite[Sect.~3]{BS71a}). 
Morever, Lemma~\ref{corr4} shows that for every $x\in\gg$ we have 
$(\de\lambda_G(x))^{2^{N-1}N+1}=0$. 
By considering the Taylor expansion of the function $\KK\mapsto\Fc_G$, $t\mapsto \lambda_G(tx)\phi$, 
for arbitrary $\phi\in\Fc_G$, it then easily follows that 
\begin{equation}\label{main_proof_eq1}
\lambda_G(x)=\ee^{\de\lambda_G(x)}\in\End(\Fc_G).
\end{equation} 
We then get $(\lambda_G(x)-\1)^{2^{N-1}N+1}=0$. 

In order to prove that $\lambda_G$ is a faithful representation, 
it suffices to check that if $x_0\in G$ and 
$\lambda_G(x_0)=\1\in\End(\Fc_G)$, then $x_0=0\in\gg$. 
In fact, since $\gg^*\subseteq\Fc_G$, it follows that for every $\xi\in\gg^*$ we have 
$\lambda_G(x_0)\xi=\xi$. 
By evaluating both sides of this equation at $0\in\gg$ we get $\xi(x_0)=0$ for every $\xi\in\gg^*$. 
Since the locally convex space underlying $\gg$ is a Hausdorff space, 
it then follows by the Hahn-Banach theorem that $x_0=0\in\gg$. 

Finally, since the smooth representation $\lambda_G$ is faithful and the Lie group $G$ has a smooth exponential map, it follows  that also the derived representation 
$\de\lambda_G$ is faithful. 
In fact, if $x_0\in\gg$ and $\de\lambda_G(x_0)=0$, then by 
\eqref{main_proof_eq1} we get 
$\lambda_G(x_0)=\ee^{\de\lambda_G(x_0)}=\1\in\End(\Fc_G)$. 
Therefore $x_0=0\in\gg$. 
\end{proof}

\begin{corollary}\label{main_cor}
Each connected, simply connected, $(N+1)$-step nilpotent Banach-Lie group has a faithful, unipotent, norm continuous representation on a suitable Banach space, 
with the unipotence index at most $2^{N-1}N+1$. 
Every $(N+1)$-step nilpotent Banach-Lie algebra has a faithful bounded representation by nilpotent operators on some Banach space, 
with the nilpotence index at most $2^{N-1}N+1$.
\end{corollary}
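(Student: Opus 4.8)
The plan is to specialize Theorem~\ref{main} to the Banach category and to verify that, in that setting, every object it produces lives in the world of Banach spaces and bounded operators. First I would reduce both assertions to a single construction: a connected, simply connected, $(N+1)$-step nilpotent Banach-Lie group may be realized as $G=(\gg,\ast)$ for its Lie algebra $\gg$, which is then an $(N+1)$-step nilpotent Banach-Lie algebra, and conversely every such Banach-Lie algebra arises in this way. Applying Theorem~\ref{main} to $\gg$ yields the space $\Fc_G=\overline{\spa}\,(\lambda(G)\gg^*)\subseteq\Pc_N(\gg,\KK)$, the faithful representations $\lambda_G$ and $\dot\lambda_G$, and the index bounds $2^{N-1}N+1$ for unipotence and nilpotence. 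What is left is analytic: to check that $\Fc_G$ is a Banach space, that the operators $\lambda_G(x)$ and $\dot\lambda_G(x)$ are bounded, and that $x\mapsto\lambda_G(x)$ is norm continuous.

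For the Banach space structure I would invoke Remark~\ref{banach}, by which each homogeneous component $\Pc^m(\gg,\KK)$ is a Banach space; the finite topological direct sum~\eqref{dot2} then makes $\Pc_N(\gg,\KK)$ a Banach space, and $\Fc_G$, being a closed subspace by Theorem~\ref{main}\eqref{main_item1}, is a Banach space as well. This is the common representation space for both halves of the corollary.

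The heart of the argument is a dimension-free bound $\Vert\dot\lambda_G(x)\Vert\le C\Vert x\Vert$ with $C=C(N)$. Here I would combine three inputs: the formula $(\dot\lambda(x)\phi)(y)=\sum_{j=0}^{N}c_j\,\phi'_y((\ad_{\gg}y)^jx)$ from~\eqref{corr3_proof_eq1}; the directional-derivative estimate $|\phi'_y(z)|\le m\Vert z\Vert\,\Vert y\Vert^{m-1}\Vert\phi\Vert_{\Pc^m(\gg,\KK)}$ of Remark~\ref{banach}, summed over the homogeneous components of $\phi$; and the continuity of the Lie bracket, which supplies $b\ge0$ with $\Vert(\ad_{\gg}y)^jx\Vert\le b^j\Vert y\Vert^j\Vert x\Vert$. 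Evaluating on the unit ball $\Vert y\Vert\le1$ and using the norm $\phi\mapsto\sup_{\Vert y\Vert\le1}|\phi(y)|$ on $\Pc_N(\gg,\KK)$ (which, by~\eqref{dot2} and Remark~\ref{banach}, defines the same topology), these assemble into $\Vert\dot\lambda_G(x)\phi\Vert\le C\Vert x\Vert\,\Vert\phi\Vert$. Hence each $\dot\lambda_G(x)$ is a bounded nilpotent operator and $\dot\lambda_G\colon\gg\to\Bc(\Fc_G)$ is bounded linear. The hard part will be precisely this uniformity: the bound must not involve $\dim\gg$, which is exactly why the derivative estimate of Remark~\ref{banach} together with the finite-dimensional reductions behind Lemmas~\ref{corr2}--\ref{corr4} are needed in order to keep $C$ depending on $N$ alone.

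Finally I would pass to the group. Since $\dot\lambda_G(x)^{2^{N-1}N+1}=0$, the identity~\eqref{main_proof_eq1} realizes $\lambda_G(x)=\ee^{\dot\lambda_G(x)}$ as the finite sum $\sum_{k=0}^{2^{N-1}N}\frac{1}{k!}\dot\lambda_G(x)^k$, so $\lambda_G(x)\in\Bc(\Fc_G)$ is unipotent with $(\lambda_G(x)-\1)^{2^{N-1}N+1}=0$; and because $\dot\lambda_G$ is bounded linear, $x\mapsto\lambda_G(x)$ is continuous from $G$ into $\Bc(\Fc_G)$ for the operator norm, which is the required norm continuity. Combined with the faithfulness already established in Theorem~\ref{main}, this gives the faithful, unipotent, norm-continuous representation of the group and the faithful bounded representation of the algebra by nilpotent operators, with the indices carried over unchanged.
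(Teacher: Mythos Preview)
Your proposal is correct and follows essentially the same route as the paper: invoke Theorem~\ref{main}, use Remark~\ref{banach} together with~\eqref{dot2} to see that $\Fc_G$ is a Banach space, combine~\eqref{corr3_proof_eq1} with the derivative estimate of Remark~\ref{banach} and the bound $\Vert(\ad_{\gg}y)^jx\Vert\le b^j\Vert y\Vert^j\Vert x\Vert$ to obtain $\Vert\dot\lambda_G(x)\Vert\le C\Vert x\Vert$, and then pass to the group via~\eqref{main_proof_eq1}. One small correction: the boundedness estimate does not use the finite-dimensional reductions of Lemmas~\ref{corr2}--\ref{corr4} at all (those were needed only for the nilpotence index, which is already part of Theorem~\ref{main}); the constant $C$ comes directly from the BCH coefficients $c_j$, the bracket norm $b$, and the norm-equivalence constants of Remark~\ref{banach}, so it depends on $\gg$ and not merely on~$N$---but this does not affect the argument, since all that is needed is $C<\infty$.
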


\begin{proof}
Using Theorem~\ref{main}\eqref{main_item3} in the special case when $\gg$ is a nilpotent Banach-Lie algebra, 
we are only left with proving that $\lambda_G$ is a norm continuous representation and $\de\lambda_G$ is a bounded linear map. 
It follows by Remark~\ref{banach} that the space of polynomials $\Pc_N(\gg,\KK)$ is a Banach space, 
hence so is its closed subspace $\Fc_G$. 
By using  \eqref{corr3_proof_eq1} and then Remark~\ref{banach}  we get  for suitable constants $C_1$, $C_2$ and $C_3$
\allowdisplaybreaks
\begin{align}
\Vert {\dot \lambda}(x_0) \psi \Vert_{\Pc^{m-1}(\gg,\KK)} &  \le C_1\sup_{\Vert y\Vert \le 1} 
\vert( {\dot \lambda}(x_0) \psi )(y) \vert \nonumber \\
& \le C_2 \sup_{\Vert y\Vert \le 1, 0\le j\le N} \vert \psi'_y((\ad_\gg y)^j x_0) \vert \nonumber \\
& \le  C_2 \sup_{\Vert y\Vert \le 1, 0\le j\le N} \Vert \psi\Vert_{\Pc^{m}(\gg,\KK)}\cdot
\Vert (\ad_\gg y)^j x_0\Vert\cdot  \Vert y\Vert^{m-1}  \nonumber \\
& \le C_3 \Vert \psi \Vert_{\Pc^{m}(\gg,\KK)}\Vert x_0\Vert \nonumber 
\end{align}
for every $m=1, \dots, N$.
By using the direct sum decomposition in \eqref{dot2} we
easily see that ${\dot \lambda}(x_0)$ is bounded linear operator on $\Fc_G$ and the representation
$\de\lambda_G\colon \gg\to\End(\Fc_G)$ 
is norm-continuous.  
It then integrates to a norm continuous representation of $G$, which coincides with $\lambda_G$ since $G$ is connected. 
\end{proof}

\begin{remark}
\normalfont
The second part of Corollary~\ref{main_cor} is implied by the first part by using a more general argument.  
In fact, it is easily seen that if a homomorphism of locally convex Lie groups with smooth exponential maps is injective, 
then its derivative is also injective. 
Moreover, a bounded linear operator on a Banach space is nilpotent of index $\le q$ if and only if it is the infinitesimal generator of a one-parameter group of unipotent operators with unipotence index~$\le q$.
\qed
\end{remark}



\subsection*{Acknowledgments} We wish to thank the Referee for 
several useful remarks and suggestions that helped us improve the exposition. 
This research has been partially supported by the 
Grant of the Romanian National Authority for Scientific Research, CNCS-UEFISCDI, project number PN-II-ID-PCE-2011-3-0131.

\end{document}